\documentclass[a4paper,12pt]{article}
\usepackage{bbm}
\usepackage{amssymb, amsmath, amscd, amsfonts, amsthm}
\usepackage[body={17cm, 25cm}, centering, dvipdfm]{geometry}
\usepackage[tiny]{titlesec}
\usepackage[symbol]{footmisc}
\usepackage{mathrsfs}
\usepackage{enumerate}
\usepackage{graphicx}
\usepackage[square, numbers, sort&compress]{natbib}
 \newtheorem{thm}{Theorem}[section]
 \newtheorem{cor}[thm]{Corollary}
 \newtheorem{lem}[thm]{Lemma}
 
 \theoremstyle{remark}
 
 \newtheorem{exa}[thm]{Example}
 \numberwithin{equation}{section}
\linespread{1.2}
\title{\textbf{\large A Positive Solution for a Nonlocal Schr\"{o}dinger Equation}}

\author{\small Yongchao Zhang\footnote{School of Mathematics and Statistics, Northeastern University at Qinhuangdao, Taishan Road \#143, Qinhuangdao, China, 066004. E-mail: ldfwq@163.com.} \;and
Gaosheng Zhu\footnote{School of Science, Tianjin University, Weijin Road \#92, Tianjin, China, 300072. E-mail: gaozsm@163.com.}}
\date{}

\begin{document}
\maketitle
\vspace{-0.5cm}
\begin{abstract}
We provide an existence result of radially symmetric, positive, classical solutions for a nonlinear Schr\"{o}dinger equation driven by the infinitesimal generator of a rotationally invariant L\'{e}vy process.\\
\noindent\textit{Key Words}: nonlocal Schr\"{o}dinger equation; positive solution; mountain pass theorem.\\
\noindent\textit{Mathematics Subject Classification (2010)}: 35A01; 35A15; 35J60.\\
\end{abstract}

\section{Introduction}\label{intro}
The purpose of this paper is to provide an existence result of radially symmetric, positive, classical solutions for the following problem,
\begin{equation}\label{e:Nonloc}
\left\{
\begin{aligned}
&-2Au+\lambda u=|u|^{p-2}u\\
&u\in H^1(\mathbb{R}^N),
\end{aligned}
\right.
\end{equation}
where $\lambda>0$, $2\leq N \leq 6$, $2<p<2^*$ with $2^*:=+\infty$ if $N=2$ and $2^*:=2N/(N-2)$ if $N>2$, and $A$ is the infinitesimal generator of a rotationally invariant L\'{e}vy process.

\begin{exa}
Consider the infinitesimal generator $A$ of a L\'{e}vy process with jumps of normal distribution.
\[
Au(x):=\frac{1}{2}\Delta u(x)+\frac{1}{2}\int_{\mathbb{R}^N}(u(x+y)+u(x-y)-2u(x))\varphi(y)\mathrm{d}y,
\]
where $\varphi(y):=(2\pi)^{-N/2}\exp(-|y|^2/2)$.
\end{exa}

A basic motivation for the study of the problem (\ref{e:Nonloc}) is the well known nonlinear Schr\"{o}dinger equation driven by the infinitesimal generator of a Brownian motion,
\begin{equation}\label{e:SchBro}
-\Delta u+\lambda u=|u|^{p-2}u.
\end{equation}
Many authors investigated Equation (\ref{e:SchBro}) (see \cite{FloeWein1986, PinoFelm1996, ByeoWang2002, PinoFelm2002, JeanTana2005} etc.).

Note that the Brownian motion is a special rotationally invariant stable L\'{e}vy process. It is natural to consider the following equation,
\begin{equation}\label{e:SchSta}
(-\Delta)^{\alpha/2} u+\lambda u=|u|^{p-2}u,
\end{equation}
where $0<\alpha\leq 2$, since $-(-\Delta)^{\alpha/2}$ is the infinitesimal generator of a rotationally invariant stable L\'{e}vy process with index $\alpha$.
Equation (\ref{e:SchSta}) has been studied by many authors (see \cite{MousWeth2012, FelmAuaaTan2012, DipiPalaVald2012, FallVald2013} etc.).

Naturally, we consider the following (nonlocal) Schr\"{o}dinger equation,
\begin{equation}\label{e:SchLev}
-2A u+\lambda u=|u|^{p-2}u,
\end{equation}
where $A$ is the infinitesimal generator of a rotationally invariant L\'{e}vy process. In the present paper, we assume that the L\'{e}vy process is of $N$ dimensions, where $2\leq N\leq 6$, with nondegenerate diffusion terms and a finite L\'{e}vy measure.

Equation (\ref{e:SchLev}) also arises from looking for the standing waves of the following Schr\"{o}dinger equation,
\begin{equation*}
\mathrm{i}\frac{\partial\psi}{\partial\, t}-2A \psi=|\psi|^{p-2}\psi.
\end{equation*}

Before stating the main result of the present paper, let us make some comments on the operators $-(-\Delta)^{\alpha/2}$ and $A$. If $0<\alpha<2$, then the L\'{e}vy processes generated by $-(-\Delta)^{\alpha/2}$ are pure jump processes; in other words, these processes do not contain any diffusion term. In fact, the corresponding characteristics of them are given by $(0,\,0,\,\mu)$ with
\[
\text{$\mu(\mathrm{d}x)=\frac{K(\alpha) \mathrm{d}x}{|x|^{N+\alpha}}$ for some positive constant $K(\alpha)$.}
\]
Consequently, the L\'{e}vy measure $\mu$ is not finite. For the operator $A$, the corresponding characteristics are given by $(0,\,aI,\,\nu)$ for some positive number $a$ and some finite  rotationally invariant L\'{e}vy measure $\nu$. Therefore, $-(-\Delta)^{\alpha/2}$ does not cover operators of type $A$ and vice versa; besides, Equation (\ref{e:SchLev}) is an extension of Equation (\ref{e:SchBro}).

Now we state the the main result as follows.

\begin{thm}\label{t:MainTheo}
$\mathrm{(1)}$ Any weak solution of the problem (\ref{e:Nonloc}) in $H^1(\mathbb{R}^N)$ is a $C^2$ continuous function.

\noindent $\mathrm{(2)}$ There exists a radially symmetric, positive, classical solution of problem (\ref{e:Nonloc}).

\noindent $\mathrm{(3)}$ The values of any positive solution of the problem (\ref{e:Nonloc}) at maximum points are not less than $\lambda^{\frac{1}{p-2}}$.
\end{thm}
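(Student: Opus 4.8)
The plan is to exploit the positive maximum principle satisfied by the integro-differential operator $A$. Let $u$ be a positive solution of (\ref{e:Nonloc}), which by part (1) is a $C^2$ function, and let $x_0$ be a point at which $u$ attains its maximum $M:=u(x_0)>0$. Since the L\'{e}vy measure $\nu$ is finite and rotationally invariant, I would first record the pointwise representation of $A$ on functions in $C^2\cap H^1(\mathbb{R}^N)$, namely
\[
Au(x)=\frac{a}{2}\Delta u(x)+\frac{1}{2}\int_{\mathbb{R}^N}\big(u(x+y)+u(x-y)-2u(x)\big)\,\nu(\mathrm{d}y),
\]
the symmetric form being available precisely because $\nu$ is symmetric (so the first-order truncation term drops out) and finite (so the integral converges absolutely for bounded $u$).

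The key observation is that at the maximum $x_0$ both contributions to $Au(x_0)$ are nonpositive. Indeed, since $x_0$ is a maximum of the $C^2$ function $u$, the Hessian there is negative semidefinite, so $\Delta u(x_0)\le 0$; and since $u(x_0\pm y)\le u(x_0)$ for every $y\in\mathbb{R}^N$, the integrand $u(x_0+y)+u(x_0-y)-2u(x_0)$ is pointwise $\le 0$, whence the integral is $\le 0$. Combining these yields $Au(x_0)\le 0$, i.e. $-2Au(x_0)\ge 0$. Evaluating the equation at $x_0$ and using $u(x_0)>0$ then gives
\[
M^{p-1}=|u(x_0)|^{p-2}u(x_0)=-2Au(x_0)+\lambda u(x_0)\ge \lambda M,
\]
and dividing by $M>0$ produces $M^{p-2}\ge\lambda$, that is $M\ge\lambda^{\frac{1}{p-2}}$, as claimed.

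The only genuinely nontrivial point is to guarantee that a positive $H^1$ solution actually attains its maximum at a finite point, so that the computation above makes sense. I would argue this from decay at infinity: a positive solution in $H^1(\mathbb{R}^N)$ that is continuous (by part (1)) tends to $0$ as $|x|\to\infty$, so its supremum is positive and is attained at some $x_0\in\mathbb{R}^N$, which is then a global maximum. This decay, rather than the maximum-principle computation, is where I expect the main technical effort to lie, and I would establish it alongside the regularity bootstrap already used for part (1).
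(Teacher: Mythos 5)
Your treatment of part (3) is correct and is essentially the paper's own argument: at a maximum point $x_0$ the diffusion part gives $\Delta u(x_0)\le 0$ and the jump part gives a nonpositive contribution (the paper phrases this via the positive maximum principle for the zero-order operator $A_0$; you compute it directly from the symmetric form, which is legitimate because $\nu$ is finite and rotationally invariant), whence $-2Au(x_0)\ge 0$ and the pointwise equation yields $u(x_0)^{p-2}\ge\lambda$. Your concern about whether the maximum is attained is actually unnecessary: the statement is conditional on $x_0$ being a maximum point, so no decay estimate is required.

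The genuine gap is that your proposal proves only part (3) and silently assumes parts (1) and (2), which carry essentially all of the work. For part (1) one needs a regularity bootstrap: write $A=A_2+A_0$ with $A_2$ the nondegenerate second-order part and $A_0$ the zero-order nonlocal part, observe that $A_0$ is bounded on $H^1(\mathbb{R}^N)$ because $\nu$ is finite, put the equation in the form $-2A_2u=h(\cdot)(1+|u|)$ with $h\in L^{N/2}_{\mathrm{loc}}$ (this is where $N\le 6$ enters), and then run Br\'{e}zis--Kato, elliptic $W^{2,q}_{\mathrm{loc}}$ estimates, and Sobolev embedding to reach $C^2$. For part (2), which is the heart of the theorem, the paper builds a variational framework: the equivalent inner product $(v,w)=(-2Av,w)_{L^2}+\lambda(v,w)_{L^2}$ on $H^1$, the functional $E(u)=\frac{1}{2}\|u\|^2-\frac{1}{p}\int_{\mathbb{R}^N}(u^+)^p\,\mathrm{d}x$, its $\mathbf{O}(N)$-invariance, the compact embedding $H^1_{\mathbf{O}(N)}(\mathbb{R}^N)\hookrightarrow L^p(\mathbb{R}^N)$, the Palais--Smale condition on the radial subspace, the mountain pass geometry, the principle of symmetric criticality to pass from a critical point of the restricted functional to a weak solution on all of $H^1$, and finally positivity by pairing the equation with $-u^-$ (using that the jump Dirichlet form is nonpositive on this pairing) together with the strong maximum principle for $A_2$. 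None of this machinery appears in your proposal, so as written it establishes only the a priori bound of part (3) and not the existence or regularity claims.
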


The rest of the paper is organized as follows. In Section \ref{Prelimi}, we present some preliminaries. The proof of Theorem \ref{t:MainTheo} will be given in Section \ref{ProofMainThm}.

\section{Some Preliminaries}\label{Prelimi}
This section serves as a preparation for the proof of Theorem \ref{t:MainTheo}. First, we state a compact embedding result. Second, a regularity result will be proved. Finally, we investigate the sign of solutions for a modified version of Equation (\ref{e:SchLev}).

Define
\[
\text{$H^1_{\mathbf{O}(N)}(\mathbb{R}^N):=\{u\in H^1(\mathbb{R}^N): u=gu, \;g\in\mathbf{O}(N)\}$, where $gu:=u\circ g^{-1}$.}
\]
Then we have the following lemma.
\begin{lem}[{\cite[p.18, Corollary 1.26]{Willem1996}}]\label{l:CompEmbe}
The following embedding is compact,
\[
H^1_{\mathbf{O}(N)}(\mathbb{R}^N)\hookrightarrow L^p(\mathbb{R}^N),\; 2<p<2^*.
\]
\end{lem}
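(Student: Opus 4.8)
The plan is to establish this classical Strauss--Lions type result by combining a pointwise decay estimate special to radially symmetric functions with the local Rellich--Kondrachov theorem. Note first that invariance under the full orthogonal group $\mathbf{O}(N)$ is the same as radial symmetry, so $H^1_{\mathbf{O}(N)}(\mathbb{R}^N)$ consists precisely of the radial functions in $H^1(\mathbb{R}^N)$. The ordinary Sobolev embedding already gives that $H^1(\mathbb{R}^N)\hookrightarrow L^p(\mathbb{R}^N)$ is continuous for $2\le p\le 2^*$, so the only issue is compactness; the failure of compactness on the full space $H^1(\mathbb{R}^N)$ stems from translation invariance (a fixed bump can escape to infinity), and radial symmetry is exactly what rules this out.

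The key step is the radial decay estimate: there is a constant $C=C(N)$ such that every radial $u\in H^1(\mathbb{R}^N)$ admits a representative with
\[
|u(x)|\le C\,|x|^{-(N-1)/2}\,\|u\|_{H^1(\mathbb{R}^N)}\quad\text{for a.e. }x\neq 0.
\]
I would first prove this for smooth compactly supported radial $u$, regarded as a function of $r=|x|$. Starting from
\[
|u(r)|^2=-\int_r^{\infty}\frac{\mathrm{d}}{\mathrm{d}s}|u(s)|^2\,\mathrm{d}s\le 2\int_r^{\infty}|u(s)|\,|u'(s)|\,\mathrm{d}s,
\]
I multiply by $r^{N-1}$, use $r^{N-1}\le s^{N-1}$ for $s\ge r$ to move the weight inside the integral, and apply Cauchy--Schwarz in the measure $s^{N-1}\,\mathrm{d}s$, obtaining
\[
r^{N-1}|u(r)|^2\le 2\Bigl(\int_0^{\infty}s^{N-1}|u|^2\,\mathrm{d}s\Bigr)^{1/2}\Bigl(\int_0^{\infty}s^{N-1}|u'|^2\,\mathrm{d}s\Bigr)^{1/2}\le C\,\|u\|_{H^1(\mathbb{R}^N)}^2,
\]
since these radial integrals are, up to the surface measure of the unit sphere, the squared $L^2$ norms of $u$ and $\nabla u$. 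The general case follows by density. This is where the hypothesis $N\ge 2$ enters, the decay exponent $(N-1)/2$ being positive only then.

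With the estimate in hand, the compactness argument is routine. Take a bounded sequence $(u_n)$ in $H^1_{\mathbf{O}(N)}(\mathbb{R}^N)$; by reflexivity pass to a subsequence with $u_n\rightharpoonup u$ weakly in $H^1(\mathbb{R}^N)$, and note that $u$ is again radial because the invariance condition defines a closed, hence weakly closed, subspace. On each ball $B_R$ the local Rellich--Kondrachov theorem, valid because $p<2^*$, gives $u_n\to u$ strongly in $L^p(B_R)$. For the tail I would split
\[
\int_{|x|\ge R}|u_n|^p\,\mathrm{d}x\le\Bigl(\sup_{|x|\ge R}|u_n(x)|\Bigr)^{p-2}\int_{\mathbb{R}^N}|u_n|^2\,\mathrm{d}x,
\]
and use the decay estimate to bound $\sup_{|x|\ge R}|u_n|\le C R^{-(N-1)/2}\sup_n\|u_n\|_{H^1(\mathbb{R}^N)}$, which tends to $0$ as $R\to\infty$ uniformly in $n$; here the lower bound $p>2$ is essential so that the exponent $p-2$ is positive. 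A standard $\varepsilon/3$ argument --- choosing $R$ to make the tails of the $u_n$ and of $u$ small and then invoking local strong convergence on $B_R$ --- yields $u_n\to u$ strongly in $L^p(\mathbb{R}^N)$, which is the asserted compactness.

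The main obstacle is the radial decay estimate, and with it the reason both endpoints of the range $2<p<2^*$ are required: $p<2^*$ for the local Rellich--Kondrachov compactness, and $p>2$ to convert the pointwise decay into uniform smallness of the tails. Everything else is the standard weak-convergence-plus-Rellich machinery.
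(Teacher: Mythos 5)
Your proof is correct, but there is nothing internal to compare it against: the paper does not prove this lemma, it simply quotes it from Willem \cite[Corollary 1.26]{Willem1996}. Your argument is the classical Strauss route --- the radial decay bound $|u(x)|\le C|x|^{-(N-1)/2}\|u\|_{H^1}$ proved by the $r^{N-1}$-weighted Cauchy--Schwarz trick, local Rellich--Kondrachov compactness on balls (using $p<2^*$), and the tail estimate $\int_{|x|\ge R}|u_n|^p\le(\sup_{|x|\ge R}|u_n|)^{p-2}\|u_n\|_{L^2}^2$ (using $p>2$) --- and all the supporting steps (weak closedness of the radial subspace, density of smooth compactly supported radial functions, the $\varepsilon/3$ assembly) are correctly deployed. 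It is worth noting, however, that the proof in the cited source is genuinely different from yours: Willem deduces the corollary from a more general compactness theorem for subgroups $G\subset\mathbf{O}(N)$ compatible with $\mathbb{R}^N$, whose proof rests on P.-L.\ Lions' vanishing lemma combined with an orbit/ball-counting argument --- by $G$-invariance, the $L^2$ mass of $u_n$ on a unit ball centered at distance $R$ from the origin is replicated on roughly $R^{N-1}$ disjoint rotated copies, which forces the concentration function $\sup_y\int_{B(y,1)}|u_n|^2$ to vanish and hence gives strong $L^p$ convergence. Your approach buys a self-contained, elementary proof specific to full radial symmetry (it is Strauss's original argument); Willem's approach buys generality, since it also covers symmetry groups such as $\mathbf{O}(N_1)\times\mathbf{O}(N_2)$ for which no pointwise decay estimate holds.
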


\begin{lem}\label{l:Regula}
If $u$ is a weak solution of the equation
\[
-2Au+\lambda u=(u^+)^{p-1}
\]
in $H^1(\mathbb{R}^N)$, then $u\in C^2(\mathbb{R}^N)$.
\end{lem}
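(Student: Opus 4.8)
The plan is to recast the nonlocal equation as a local, Helmholtz-type elliptic equation with a mild nonlocal perturbation, and then run a standard integrability bootstrap followed by a Schauder step. Since $A$ has characteristics $(0, aI, \nu)$ with $a>0$ and $\nu$ a finite, rotationally invariant L\'{e}vy measure, the L\'{e}vy--Khintchine formula gives
\[
Au(x) = \frac{a}{2}\Delta u(x) + \int_{\mathbb{R}^N}\bigl(u(x+y) - u(x)\bigr)\,\mathrm{d}\nu(y),
\]
the first-order correction vanishing because $\nu$ is rotationally invariant (so $\int_{|y|<1} y\,\mathrm{d}\nu(y) = 0$). Writing $K := \nu(\mathbb{R}^N) < \infty$ and $(\nu * u)(x) := \int_{\mathbb{R}^N} u(x+y)\,\mathrm{d}\nu(y)$, the equation $-2Au + \lambda u = (u^+)^{p-1}$ becomes
\[
-a\Delta u + (\lambda + 2K)\,u = (u^+)^{p-1} + 2(\nu * u) =: g .
\]
This is a constant-coefficient elliptic equation with strictly positive zeroth-order coefficient, which I would solve via the Bessel-type kernel $G$ determined by $-a\Delta G + (\lambda+2K)G = \delta_0$, so that $u = G * g$. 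The kernel $G$ is positive, smooth away from the origin, integrable, and decays exponentially; consequently $G*(\cdot)$ maps $L^q \to W^{2,q}$ and $C^{0,\gamma}\to C^{2,\gamma}$ with global bounds.

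The essential structural point is that the nonlocal term never costs regularity. Because $\nu$ is a \emph{finite} measure, Young's inequality gives $\|\nu * v\|_{L^q} \le K\|v\|_{L^q}$ for every $q \in [1,\infty]$, and for a H\"{o}lder function one has $[\nu * v]_{C^{0,\gamma}} \le K[v]_{C^{0,\gamma}}$ directly from $|(\nu*v)(x) - (\nu*v)(x')| \le \int |v(x+y) - v(x'+y)|\,\mathrm{d}\nu(y)$. Thus $\nu * u$ always sits in the same function space as $u$ itself, and the bootstrap is governed entirely by the genuinely nonlinear term $(u^+)^{p-1}$, exactly as in the local problem (\ref{e:SchBro}).

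Next comes the integrability bootstrap. Starting from $u \in H^1(\mathbb{R}^N) \hookrightarrow L^{2^*}(\mathbb{R}^N)$, if $u \in L^s$ then $(u^+)^{p-1} \in L^{s/(p-1)}$ and $\nu * u \in L^s \subset L^{s/(p-1)}$, so $g \in L^{s/(p-1)}$ and hence $u \in W^{2, s/(p-1)} \hookrightarrow L^{s'}$ with $1/s' = (p-1)/s - 2/N$. One checks that $s' > s$ precisely when $s > N(p-2)/2$, and the starting exponent $s_0 = 2^* = 2N/(N-2)$ satisfies this exactly because $p < 2^*$. Moreover the gain $1/s - 1/s' = 2/N - (p-2)/s$ increases along the iteration, so each step raises the integrability by a definite amount until the integrability of $g$ exceeds $N/2$, at which point $u \in L^\infty(\mathbb{R}^N)$. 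Feeding $u \in L^\infty$ back in yields $g \in L^q_{\mathrm{loc}}$ for all $q<\infty$, so $u \in W^{2,q}_{\mathrm{loc}}$ for all $q$ and, by Morrey's embedding, $u \in C^{1,\gamma}_{\mathrm{loc}}(\mathbb{R}^N)$ for every $\gamma \in (0,1)$.

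Finally I would upgrade to $C^2$ by a Schauder argument. With $u \in C^{1,\gamma}$ bounded, the map $t \mapsto (t^+)^{p-1}$ is $C^1$ (here $p>2$ is used), hence locally Lipschitz, so $(u^+)^{p-1} \in C^{0,\gamma}_{\mathrm{loc}}$; combined with $[\nu * u]_{C^{0,\gamma}} \le K[u]_{C^{0,\gamma}}$ this gives $g \in C^{0,\gamma}_{\mathrm{loc}}$. Interior Schauder estimates for $-a\Delta u + (\lambda + 2K)u = g$ then yield $u \in C^{2,\gamma}_{\mathrm{loc}}(\mathbb{R}^N) \subset C^2(\mathbb{R}^N)$. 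The main obstacle is not any single estimate but keeping the nonlocal convolution under control uniformly through every stage: verifying at the outset that $\nu * (\cdot)$ is bounded on each scale (both $L^q$ and $C^{0,\gamma}$) is what makes the classical local bootstrap go through unchanged. A secondary point requiring care is the exponent bookkeeping in the bootstrap, where the subcritical condition $p < 2^*$ is exactly what guarantees the iteration gains, rather than loses, integrability at each step.
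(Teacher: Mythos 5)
Your proposal is correct in substance and rests on the same structural reduction as the paper: split $A$ into its diffusion part and its finite-activity jump part, and treat the latter as a bounded, regularity-preserving perturbation so that the equation becomes a local second-order elliptic one. (Your physical-space formula $A_0u=\nu*u-\nu(\mathbb{R}^N)u$ is exactly the operator whose symbol $\int[\cos(\xi\cdot x)-1]\,\nu(\mathrm{d}x)$ the paper works with on the Fourier side.) Where you diverge is in the regularity machinery. The paper writes the equation as $-2A_2u=h(\cdot)(1+|u|)$ with $h=\bigl(2A_0u+(u^+)^{p-1}-\lambda u\bigr)/(1+|u|)\in L^{N/2}_{\mathrm{loc}}$ and invokes the Br\'{e}zis--Kato lemma to obtain $u\in L^q_{\mathrm{loc}}$ for all $q<\infty$ in one stroke; this is precisely where the standing hypothesis $N\le 6$ is used, since one needs $A_0u\in H^1\hookrightarrow L^{2^*}\subset L^{N/2}_{\mathrm{loc}}$, i.e.\ $2^*\ge N/2$. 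You instead run the explicit exponent iteration via the Bessel-type kernel, where only the subcriticality $p<2^*$ is needed to guarantee a gain at each step, so your route is in this respect slightly more general (and more self-contained, at the cost of the exponent bookkeeping). Your endgame is also more careful than the paper's: the paper concludes $u\in C^2$ by appealing twice to ``the ellipticity of $A$,'' whereas you make the Schauder step explicit, including the two points the paper leaves implicit --- that $t\mapsto(t^+)^{p-1}$ is $C^1$ because $p>2$, and that $[\nu*v]_{C^{0,\gamma}}\le\nu(\mathbb{R}^N)[v]_{C^{0,\gamma}}$ so the nonlocal term does not obstruct the H\"{o}lder bootstrap.

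One small inaccuracy to repair: the inclusion $L^s(\mathbb{R}^N)\subset L^{s/(p-1)}(\mathbb{R}^N)$ you use for the term $\nu*u$ is false on an unbounded domain. It is harmless because throughout the iteration $u$ (hence $\nu*u$) lies in $L^2\cap L^{s}$, so interpolation places $\nu*u$ in every intermediate $L^r$; alternatively, handle the two pieces of $g$ separately, since $G*(\nu*u)$ is always at least as integrable as $u$ itself. With that adjustment the bootstrap closes exactly as you describe.
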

\begin{proof}
1. Note that the symbol $\sigma_A$ of $A$ is given by
\[
\sigma_A(\xi)=-\frac{a}{2}|\xi|^2+\int_{\mathbb{R}^N}[\cos(\xi\cdot x)-1]\nu(\mathrm{d}x),
\]
where $a$ is a positive number and $\nu$ is a finite $\mathbf{O}(N)$-invariant L\'{e}vy measure (see \cite[p.128, Exercise 2.4.23 and pp.163-164, Theorem 3.3.3]{App2009}).

Let $A_2$ be the operator with the symbol
\[
\sigma_{A_2}(\xi)=-\frac{a}{2}|\xi|^2,
\]
and $A_0$ be the operator with the symbol
\[
\sigma_{A_0}(\xi)=\int_{\mathbb{R}^N}[\cos(\xi\cdot x)-1]\nu(\mathrm{d}x).
\]

Then we have
\[
-2A_2u=h(\cdot)(1+|u|),
\]
where
\[
\textrm{$h(x):=\frac{2A_0u(x)+(u^+(x))^{p-1}-\lambda u(x)}{1+|u(x)|}$ \quad for $x\in\mathbb{R}^N$.}
\]

\noindent 2. For any $u\in H^1(\mathbb{R}^N)$, we have
\begin{equation}\label{e:ZeroOder}
\int_{\mathbb{R}^N}(1+|\xi|^2)\left(\int_{\mathbb{R}^N}[\cos(\xi\cdot x)-1]\nu(\mathrm{d}x)\right)^2|\widehat{u}(\xi)|^2\mathrm{d}\xi<\infty,
\end{equation}
where ``\;\,$\widehat{}$\;\,'' denotes the Fourier transformation.

Thus $A_0:H^1(\mathbb{R}^N)\rightarrow H^1(\mathbb{R}^N)$ is a bounded operator thanks to (\ref{e:ZeroOder}).

Furthermore, it follows that $h\in L_{\mathrm{loc}}^{N/2}(\mathbb{R}^N)$. Consequently, we have $u\in L_{\mathrm{loc}}^q(\mathbb{R}^N)$ for any $q\in[1, +\infty)$ by Br\'{e}zis-Kato theorem (see, for example, \cite[p.270, B.3 Lemma]{Stru2008}). Then, by the ellipticity of operator $A$, we find $u\in W_{\mathrm{loc}}^{2,\, q}(\mathbb{R}^N)$ for any $q\in[1, +\infty)$. Now Sobolev embedding theorem implies $u\in C _{\mathrm{loc}}^1(\mathbb{R}^N)$. Finally, also by the ellipticity of operator $A$, it follows that  $u\in C^2(\mathbb{R}^N)$.
\end{proof}

\begin{lem}\label{l:Posi}
If $u\in C^2(\mathbb{R}^N)\cap H^1(\mathbb{R}^N)$ is a nontrivial solution of the equation
\[
-2Au+\lambda u=(u^+)^{p-1},
\]
then $u>0$.
\end{lem}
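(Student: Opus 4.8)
The plan is to argue in two steps: first establish that $u\ge 0$, and then upgrade this to strict positivity via a maximum principle. Throughout I use the representation of $A$ read off from the symbol in the proof of Lemma~\ref{l:Regula}, namely
\[
Au(x)=\frac{a}{2}\Delta u(x)+\int_{\mathbb{R}^N}\big[u(x+y)-u(x)\big]\nu(\mathrm{d}y),
\]
with $a>0$ and $\nu$ a finite $\mathbf{O}(N)$-invariant (hence symmetric) measure. The associated bilinear form is
\[
\langle -2Au,v\rangle = a\int_{\mathbb{R}^N}\nabla u\cdot\nabla v\,\mathrm{d}x+\int_{\mathbb{R}^N}\!\int_{\mathbb{R}^N}\big[u(x+y)-u(x)\big]\big[v(x+y)-v(x)\big]\nu(\mathrm{d}y)\mathrm{d}x,
\]
the symmetrization being legitimate since $\nu$ is symmetric.

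For the first step I would test the weak formulation against $v=u^-:=\max\{-u,0\}\in H^1(\mathbb{R}^N)$. Because $u^+u^-=0$ pointwise, the right-hand side $\int_{\mathbb{R}^N}(u^+)^{p-1}u^-$ vanishes, the local term contributes $-a\|\nabla u^-\|_2^2$, and the mass term contributes $-\lambda\|u^-\|_2^2$. The crux is the nonlocal term: writing $u=u^+-u^-$ and expanding, it splits into $-\int\int[u^-(x+y)-u^-(x)]^2\,\nu\le 0$ plus a cross term which, using $u^+u^-=0$, reduces to $-\int\int[u^+(x+y)u^-(x)+u^+(x)u^-(x+y)]\,\nu\le 0$ because $u^\pm\ge 0$. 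Hence the entire nonlocal term is $\le 0$. As all three contributions on the left are nonpositive and sum to zero, each must vanish; in particular $\lambda\|u^-\|_2^2=0$, so $u^-=0$ and $u\ge 0$.

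For the second step, since now $u^+=u$, the equation reads pointwise (using $u\in C^2$)
\[
-a\Delta u+\Big(\lambda+2\nu(\mathbb{R}^N)\Big)u = u^{p-1}+2\int_{\mathbb{R}^N}u(x+y)\,\nu(\mathrm{d}y).
\]
The key observation is that the finiteness of $\nu$ lets me absorb the nonlocal operator: moving the integral term to the right and using $u\ge 0$, the right-hand side is nonnegative, so $u$ is a classical supersolution of the local, coercive operator $-a\Delta+V$ with the positive constant $V:=\lambda+2\nu(\mathbb{R}^N)>0$. Since $u\ge 0$ is nontrivial and $C^2$, the strong maximum principle (E.~Hopf) rules out an interior zero, whence $u>0$ everywhere.

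I expect the main obstacle to be the sign analysis of the nonlocal quadratic form in the first step — verifying that the cross term is genuinely nonpositive — which hinges on the pointwise identity $u^+u^-=0$ together with $u^\pm\ge 0$. The second step is comparatively routine once one notices that a \emph{finite} Lévy measure converts the nonlocal contribution into a harmless nonnegative source term, reducing the matter to a classical elliptic maximum principle.
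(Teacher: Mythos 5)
Your argument is correct and follows the same two-step route as the paper: test against the negative part to kill $u^-$ (the paper checks the sign of the nonlocal cross term by a case analysis over the sign quadrants of $(u(x),u(y))$, while you expand $u=u^+-u^-$ and use $u^+u^-=0$ — the same pointwise inequality), and then rewrite the equation as $-a\Delta u+(\lambda+2\nu(\mathbb{R}^N))u\ge 0$ using the finiteness of $\nu$ and apply the strong maximum principle. No gaps; the differences are purely presentational.
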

\begin{proof}
1. First we have
\[
\begin{split}
&\quad\int\int(u(x)-u(x+y))(u^-(x)-u^-(x+y))\nu(\mathrm{d}y)\mathrm{d}x\\
&=\int\int(u(x)-u(y))(u^-(x)-u^-(y))\nu(-x+\mathrm{d}y)\mathrm{d}x\leq 0,
\end{split}
\]
where we have used
\[
\begin{split}
\mathbb{R}^2=&\{x:u(x)\geq 0\}\times\{y:u(y)\geq 0\}\cup\{x:u(x)\geq 0\}\times\{y:u(y)< 0\}\\
&\cup\{x:u(x)< 0\}\times\{y:u(y)\geq 0\}\cup\{x:u(x)< 0\}\times\{y:u(y)<0\}
\end{split}
\]
for the inequality.

Then it follows that
\[
(-2Au,-u^-)_{L^2}=a\|\nabla u^-\|_{L^2}-\int\int(u(x)-u(x+y))(u^-(x)-u^-(x+y))\nu(\mathrm{d}y)\mathrm{d}x\geq 0.
\]

Therefore, in light of $(-2Au,-u^-)_{L^2}+\lambda\|u^-\|_{L^2}^2=0$, we have $u^-=0$, which implies $u\geq 0$.

\noindent 2. Rewrite the equation $-2Au+\lambda u=(u^+)^{p-1}$ as
\[
-2A_2u+(\lambda +2\nu(\mathbb{R}^N))u=(u^+)^{p-1}+2\int_{\mathbb{R}^N}u(\cdot+y)\nu(\mathrm{d}y).
\]
Then we find that
\[
-2A_2u+(\lambda +2\nu(\mathbb{R}^N))u\geq 0.
\]
It follows from the strong maximum principle that $u>0$.
\end{proof}

\begin{cor}
Assume that $u\in C^2(\mathbb{R}^N)\cap H^1(\mathbb{R}^N)$ is a nontrivial solution of the equation $-2Au+\lambda u=(u^+)^{p-1}$. If $x_0\in\mathbb{R}^N$ is a maximum point of the function $u$, then $u(x_0)\geq \lambda^{\frac{1}{p-2}}$.
\end{cor}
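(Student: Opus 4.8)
The plan is to combine the positivity already established with the pointwise sign of $A$ at a maximum point. By Lemma~\ref{l:Posi} the solution satisfies $u>0$ everywhere, so $u^+=u$ and the equation reduces to the pointwise identity
\[
-2Au(x)+\lambda u(x)=u(x)^{p-1},\qquad x\in\mathbb{R}^N .
\]
Since $u$ is continuous and vanishes at infinity (as an $H^1$-solution enjoying the regularity of Lemma~\ref{l:Regula}), its supremum is attained; I take $x_0$ to be a point where the \emph{global} maximum is reached, and it is precisely global maximality that the argument below exploits.

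Next I would evaluate the operator at $x_0$ using the decomposition $A=A_2+A_0$ from the proof of Lemma~\ref{l:Regula}. Reading off the symbols, $A_2u=\tfrac{a}{2}\Delta u$ with $a>0$, while the finite, $\mathbf{O}(N)$-invariant measure $\nu$ gives the nonlocal part in symmetrized form
\[
A_0u(x)=\frac12\int_{\mathbb{R}^N}\bigl(u(x+y)+u(x-y)-2u(x)\bigr)\,\nu(\mathrm{d}y),
\]
the integral converging because $\nu(\mathbb{R}^N)<\infty$ and $u$ is bounded. At the global maximum $x_0$ both contributions are nonpositive: $\Delta u(x_0)\le 0$ by the second-order condition, and the integrand $u(x_0+y)+u(x_0-y)-2u(x_0)\le 0$ for every $y$ since $u(x_0\pm y)\le u(x_0)$. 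Hence $Au(x_0)\le 0$, equivalently $-2Au(x_0)\ge 0$.

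Finally I would substitute $x=x_0$ into the reduced equation. From $-2Au(x_0)\ge 0$ one obtains $\lambda u(x_0)\le u(x_0)^{p-1}$, and dividing by $u(x_0)>0$ yields $\lambda\le u(x_0)^{p-2}$, that is $u(x_0)\ge\lambda^{1/(p-2)}$, as claimed.

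I expect the only genuine subtlety to be the justification that the maximum is \emph{global} and attained, since the sign of the nonlocal term $A_0u(x_0)$ requires $u(x_0\pm y)\le u(x_0)$ for \emph{all} $y$, not merely near $x_0$; this is where the decay of $u$ at infinity enters and is the single point deserving care. The verification that $A_0$ carries the stated symmetrized integral form (so that the first-order drift vanishes by the $\mathbf{O}(N)$-invariance of $\nu$) and that the integral converges are routine given the finiteness of $\nu$ and the boundedness of $u$.
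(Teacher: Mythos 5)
Your proof is correct and follows essentially the same route as the paper: positivity from Lemma~\ref{l:Posi}, the second-order condition $\Delta u(x_0)\le 0$, nonpositivity of the nonlocal part at the maximum, and then reading off the inequality from the equation. The only difference is cosmetic --- where the paper cites the positive maximum principle to get $A_0u(x_0)\le 0$, you unpack that citation into the explicit symmetrized integral and check the sign by hand (correctly flagging that this needs $x_0$ to be a \emph{global} maximum, a hypothesis the paper leaves implicit).
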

\begin{proof}1. Since $x_0$ is a maximum point of the function $u$, we have $\Delta u(x_0)\leq 0$.

\noindent 2. Note that Lemma \ref{l:Posi} implies $u(x_0)>0$. It follows from the positive maximum principle (see, for example, \cite[p.283, (1.5) proposition]{RevuYor1999} or \cite[p.181, Theorem 3.5.2]{App2009}) that $A_0u(x_0)\leq 0$. This and $\Delta u(x_0)\leq 0$ imply $Au(x_0)\leq 0$. Therefore,
\[
u(x_0)^{p-1}-\lambda u(x_0)=-2Au(x_0)\geq 0.
\]
So the inequality $u(x_0)\geq \lambda^{\frac{1}{p-2}}$ holds.
\end{proof}

\section{Proof of Theorem \ref{t:MainTheo}}\label{ProofMainThm}
In this section, we provide a proof of Theorem \ref{t:MainTheo} via the mountain pass theorem.

Observe that the operator $-A$ is positively self-adjoint (see \cite[p.178, Theorem 3.4.10 and p.190, Theorem 3.6.1]{App2009}). We define a new inner product on $H^1(\mathbb{R}^N)$ by
\[
\text{$(v,w):=(-2Av, w)_{L^2}+\lambda(v,w)_{L^2}$ for any $v,w\in C^{\infty}_0(\mathbb{R}^N)$,}
\]
and denote the induced norm of it by $\|\cdot\|$. Since the operator $-A_0$ is also positively self-adjoint, it follows from $A=A_2+A_0$ and (\ref{e:ZeroOder}) that the norm $\|\cdot\|$ is equivalent to $\|\cdot\|_{H^1}$.

Define a functional $E: H^1(\mathbb{R}^N)\rightarrow \mathbb{R}$ by
\begin{equation*}\label{e:Func}
E(u):=\frac{1}{2}\|u\|^2-\frac{1}{p}\int_{\mathbb{R}^N}(u^+(x))^p\mathrm{d}x.
\end{equation*}
Then it follows from \cite[p.11, Corollary 1.13]{Willem1996} that $E\in C^2(H^1(\mathbb{R}^N), \mathbb{R})$. In addition, the critical points of the functional $E$ are weak solutions of the equation $-2Au+\lambda u=(u^+)^{p-1}$ in $H^1(\mathbb{R}^N)$, and vice versa.

\begin{lem}
The functional $E$ is $\mathbf{O}(N)$-invariant.
\end{lem}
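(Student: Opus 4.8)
The plan is to show that each of the two terms defining $E$ is unchanged under the action $u\mapsto gu=u\circ g^{-1}$ for every $g\in\mathbf{O}(N)$. The nonlinear term is handled first and most easily: since $(gu)^+=u^+\circ g^{-1}$ and $g$ is orthogonal with $|\det g|=1$, the change of variables $y=g^{-1}x$ gives $\int_{\mathbb{R}^N}((gu)^+(x))^p\,\mathrm{d}x=\int_{\mathbb{R}^N}(u^+(y))^p\,\mathrm{d}y$. The identical substitution yields $(gu,gu)_{L^2}=(u,u)_{L^2}$, so the whole question reduces to the invariance of the quadratic form $(-2A(gu),gu)_{L^2}$, equivalently to the identity $\|gu\|=\|u\|$.

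For this I would pass to the Fourier side. Recalling from the proof of Lemma \ref{l:Regula} that $A$ acts as the Fourier multiplier with symbol $\sigma_A(\xi)=-\frac{a}{2}|\xi|^2+\int_{\mathbb{R}^N}[\cos(\xi\cdot x)-1]\nu(\mathrm{d}x)$, Plancherel's identity represents the norm as $\|u\|^2=\int_{\mathbb{R}^N}(-2\sigma_A(\xi)+\lambda)|\widehat{u}(\xi)|^2\,\mathrm{d}\xi$. The transformation rule $\widehat{gu}(\xi)=\widehat{u}(g^{-1}\xi)$, which follows from $g^{\mathrm{T}}=g^{-1}$ and $|\det g|=1$, then lets me substitute $\eta=g^{-1}\xi$ and reduce the claim $\|gu\|=\|u\|$ to the single assertion that $\sigma_A$ is a radial function of $\xi$, i.e.\ $\sigma_A(g\xi)=\sigma_A(\xi)$.

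The heart of the argument, and the step I expect to require the most care, is establishing this radiality of $\sigma_A$. The diffusion part $-\frac{a}{2}|\xi|^2$ is manifestly invariant since $|g\xi|=|\xi|$; for the jump part I would use the $\mathbf{O}(N)$-invariance of $\nu$, writing $\cos(g\xi\cdot x)=\cos(\xi\cdot g^{-1}x)$ and substituting $x\mapsto gx$ so that $\int_{\mathbb{R}^N}[\cos(g\xi\cdot x)-1]\nu(\mathrm{d}x)=\int_{\mathbb{R}^N}[\cos(\xi\cdot x)-1]\nu(\mathrm{d}x)$, the invariance of $\nu$ being exactly what makes the pushforward $\nu(\mathrm{d}(gx))=\nu(\mathrm{d}x)$ leave the integral fixed. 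Assembling these facts gives $\|gu\|=\|u\|$, and combined with the invariance of the nonlinear term this yields $E(gu)=E(u)$ for all $g\in\mathbf{O}(N)$. (An equivalent route would bypass the Fourier representation by showing directly that $A$ intertwines with the rotation action, $A(gu)=g(Au)$, via the integral form of the jump operator; the same $\mathbf{O}(N)$-invariance of $\nu$ is the decisive ingredient there as well.)
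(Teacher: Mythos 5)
Your proposal is correct and follows essentially the same route as the paper: reduce to the invariance of the norm $\|\cdot\|$, pass to the Fourier side where $A$ acts by the multiplier $\sigma_A$, and deduce $\|g u\|=\|u\|$ from the rotational invariance of $\sigma_A$, which in turn rests on the $\mathbf{O}(N)$-invariance of the L\'{e}vy measure $\nu$. You merely spell out the invariance of the nonlinear term and the substitution $\widehat{gu}(\xi)=\widehat{u}(g^{-1}\xi)$ more explicitly than the paper does.
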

\begin{proof}
We only need to prove that the norm $\|\cdot\|$ is $\mathbf{O}(N)$-invariant.

Note that the symbol $\sigma_A$ of $A$ is given by
\[
\sigma_A(\xi)=-\frac{a}{2}|\xi|^2+\int_{\mathbb{R}^N}[\cos(\xi\cdot x)-1]\nu(\mathrm{d}x),
\]
where $a$ is a positive number and $\nu$ is a finite $\mathbf{O}(N)$-invariant L\'{e}vy measure (see \cite[p.128, Exercise 2.4.23 and pp.163-164, Theorem 3.3.3]{App2009}). We find the symbol $\sigma_A$ of $A$ is $\mathbf{O}(N)$-invariant.

Therefore, for any $\varphi\in C^{\infty}_0(\mathbb{R}^N)$ and $g\in \mathbf{O}(N)$, we have
\[
\begin{aligned}
\|g\varphi\|^2&=(-2A(g\varphi), g\varphi)_{L^2}+\lambda\|g\varphi\|_{L^2}^2\\
&=(-2\sigma_A\cdot\widehat{g\varphi}, \widehat{g\varphi})_{L^2}+\lambda\|g\varphi\|_{L^2}^2\\
&=(-2g^{-1}\sigma_A\cdot \widehat{\varphi}, \widehat{\varphi})_{L^2}+\lambda\|g\varphi\|_{L^2}^2\\
&=(-2\sigma_A\cdot \widehat{\varphi}, \widehat{\varphi})_{L^2}+\lambda\|\varphi\|_{L^2}^2=\|\varphi\|^2,
\end{aligned}
\]
which implies that the norm $\|\cdot\|$ is $\mathbf{O}(N)$-invariant.
\end{proof}

We need the following Lemma \ref{l:SupOperInfi} in the verification of the PS condition for the functional $E$ restricted to $H_{\mathbf{O}(N)}^1(\mathbb{R}^N)$.
\begin{lem}[{\cite[p.134, Theorem A.4]{Willem1996}}]\label{l:SupOperInfi}
Assume that $1\leq p<\infty$, $1\leq q<\infty$, and $g\in C(\mathbb{R}^N)$ such that
\[
\text{$|g(u)|\leq c|u|^{p/q}$ for some constant $c$.}
\]
Then the operator $L: L^p(\mathbb{R}^N)\rightarrow L^q(\mathbb{R}^N)$ defined by $u\mapsto g(u)$ is continuous.
\end{lem}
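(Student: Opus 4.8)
The plan is to prove the result in two stages: first verify that $L$ is well-defined as a map $L^p(\mathbb{R}^N)\to L^q(\mathbb{R}^N)$, and then establish its continuity by a subsequence argument combined with the dominated convergence theorem. Here $L(u)$ is understood as the superposition $g\circ u$, i.e. $L(u)(x)=g(u(x))$.

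First I would check well-definedness and the mapping property. For $u\in L^p(\mathbb{R}^N)$, the growth hypothesis gives the pointwise bound $|g(u(x))|^q\leq c^q|u(x)|^p$ almost everywhere, so integrating yields $\|L(u)\|_{L^q}^q\leq c^q\|u\|_{L^p}^p<\infty$. Hence $g(u)\in L^q(\mathbb{R}^N)$, and $L$ indeed maps $L^p$ into $L^q$.

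The core of the argument is sequential continuity. Let $u_n\to u$ in $L^p(\mathbb{R}^N)$; I want $L(u_n)\to L(u)$ in $L^q(\mathbb{R}^N)$. Rather than argue for the full sequence directly, I would invoke the standard subsequence principle: it suffices to show that \emph{every} subsequence of $(u_n)$ admits a further subsequence $(u_{n_k})$ along which $L(u_{n_k})\to L(u)$ in $L^q$. Given any subsequence, the completeness theory of $L^p$ lets me extract (after relabeling) a sequence that converges to $u$ almost everywhere and is dominated by a fixed majorant $w\in L^p(\mathbb{R}^N)$, with $|u_{n_k}(x)|\leq w(x)$ a.e.; concretely, one passes to a rapidly convergent subsequence so that $w:=|u_{n_1}|+\sum_k|u_{n_{k+1}}-u_{n_k}|$ has finite $L^p$-norm by the triangle inequality. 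Continuity of $g$ then gives $g(u_{n_k}(x))\to g(u(x))$ a.e., while the growth condition supplies the integrable bound $|g(u_{n_k})-g(u)|^q\leq 2^q c^q\,w^p\in L^1(\mathbb{R}^N)$. The dominated convergence theorem yields $\int_{\mathbb{R}^N}|g(u_{n_k})-g(u)|^q\to 0$, i.e. $L(u_{n_k})\to L(u)$ in $L^q$. The subsequence principle then forces convergence of the whole sequence, proving continuity.

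The main obstacle I anticipate is precisely the passage from $L^p$-convergence to an a.e.-convergent, uniformly dominated subsequence: one cannot in general obtain such domination along the full sequence, which is why the subsequence-of-every-subsequence device together with the explicit construction of the $L^p$ majorant $w$ is essential. Once that majorant is in hand, the remaining steps are a routine application of the growth bound and the dominated convergence theorem.
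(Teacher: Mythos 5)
Your proof is correct. The paper itself gives no proof of this lemma---it is quoted directly from Willem's book---and your argument (well-definedness from the growth bound, then the subsequence-of-every-subsequence principle combined with the extraction of an a.e.\ convergent subsequence dominated by a fixed $L^p$ majorant and the dominated convergence theorem) is essentially the standard proof appearing in that cited reference.
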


\begin{lem}[The PS condition for the functional $E$ restricted to $H_{\mathbf{O}(N)}^1(\mathbb{R}^N)$]\label{l:PSCondi}
Any sequence $\{u_n\}_{n\in\mathbb{N}}\in H_{\mathbf{O}(N)}^1(\mathbb{R}^N)$ such that
\[
d:=\sup\limits_{{}^{n\in\mathbb{N}}}\{E(u_n)\}<\infty,\quad E\,'(u_n)\rightarrow 0 \; \mathrm{as}\;n\rightarrow\infty
\]
contains a convergent subsequence.
\end{lem}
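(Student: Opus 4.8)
The plan is to follow the standard recipe for subcritical variational problems: first extract boundedness of the sequence from the two hypotheses, and then upgrade weak convergence to strong convergence using the compact embedding of Lemma~\ref{l:CompEmbe}. To begin, I would show $\{u_n\}$ is bounded in $H^1(\mathbb{R}^N)$. Since $u^+u^-=0$ gives $E'(u)[u]=\|u\|^2-\int_{\mathbb{R}^N}(u^+)^p\,\mathrm{d}x$, one forms the combination
\[
E(u_n)-\frac{1}{p}E'(u_n)[u_n]=\Big(\frac12-\frac1p\Big)\|u_n\|^2.
\]
Because $d=\sup_n E(u_n)<\infty$, the sequence $\{\|E'(u_n)\|\}$ is bounded, and $p>2$ makes $\frac12-\frac1p>0$, so the inequality $(\frac12-\frac1p)\|u_n\|^2\le d+\frac1p\|E'(u_n)\|\,\|u_n\|$ forces $\{\|u_n\|\}$ to be bounded. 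By the equivalence of $\|\cdot\|$ and $\|\cdot\|_{H^1}$, $\{u_n\}$ is bounded in $H^1(\mathbb{R}^N)$.

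Next, since $H^1_{\mathbf{O}(N)}(\mathbb{R}^N)$ is a closed subspace of the Hilbert space $H^1(\mathbb{R}^N)$, a subsequence (not relabeled) satisfies $u_n\rightharpoonup u$ weakly with $u\in H^1_{\mathbf{O}(N)}(\mathbb{R}^N)$. The crucial point is that Lemma~\ref{l:CompEmbe} then yields strong convergence $u_n\to u$ in $L^p(\mathbb{R}^N)$; this is exactly where the $\mathbf{O}(N)$-symmetry is used, since on all of $H^1(\mathbb{R}^N)$ the embedding into $L^p$ fails to be compact.

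To obtain strong convergence in $H^1$, I would test the difference of derivatives against $u_n-u$:
\[
E'(u_n)[u_n-u]-E'(u)[u_n-u]=\|u_n-u\|^2-\int_{\mathbb{R}^N}\big[(u_n^+)^{p-1}-(u^+)^{p-1}\big](u_n-u)\,\mathrm{d}x.
\]
The left-hand side tends to $0$: the first term because $E'(u_n)\to0$ while $\{u_n-u\}$ is bounded, the second because $u_n-u\rightharpoonup0$ and $E'(u)\in(H^1(\mathbb{R}^N))^*$. For the integral, Lemma~\ref{l:SupOperInfi} applied with $g(t)=(t^+)^{p-1}$ (so $q=p/(p-1)$) shows that $u\mapsto(u^+)^{p-1}$ is continuous from $L^p$ into $L^{p/(p-1)}$, whence $(u_n^+)^{p-1}\to(u^+)^{p-1}$ in $L^{p/(p-1)}$; combining this with $\|u_n-u\|_{L^p}\to0$ and H\"older's inequality makes the integral vanish in the limit. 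Therefore $\|u_n-u\|\to0$, which is the desired conclusion.

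The main obstacle is compactness, and it is precisely what the symmetry restriction and Lemma~\ref{l:CompEmbe} resolve: without it a Palais--Smale sequence could escape to infinity by translation and lose its limit. Granting the compact embedding, the remaining work is the routine bookkeeping above, where the only technical care needed is the correct application of Lemma~\ref{l:SupOperInfi} with the conjugate exponent $p/(p-1)$ to guarantee continuity of the superposition operator $u\mapsto(u^+)^{p-1}$, so that the nonlinear cross term genuinely converges to zero.
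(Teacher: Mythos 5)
Your proposal is correct and follows essentially the same route as the paper's own proof: the combination $E(u_n)-\frac1p E'(u_n)[u_n]$ for boundedness, the compact embedding of Lemma~\ref{l:CompEmbe} for strong $L^p$ convergence, Lemma~\ref{l:SupOperInfi} with exponent $p/(p-1)$, and the same identity for $\|u_n-u\|^2$. The only (harmless) quibble is that the boundedness of $\{\|E'(u_n)\|\}$ follows from $E'(u_n)\to 0$, not from $d<\infty$.
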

\begin{proof}
The proof is the same as that of \cite[p.15, Lemma 1.20]{Willem1996}.

\noindent 1. For $n$ large enough, we have
\[
d+\|u_n\|\geq E(u_n)-\frac{1}{p}\langle E\,'(u_n), u_n\rangle=\left(\frac{1}{2}-\frac{1}{p}\right)\|u_n\|^2.
\]
It follows that $\{u_n\}_{n\in\mathbb{N}}$ is bounded in $ H_{\mathbf{O}(N)}^1(\mathbb{R}^N)$ since $p>2$.

\noindent 2. Without loss of generality, we assume that $u_n\rightharpoonup u$ in $H_{\mathbf{O}(N)}^1(\mathbb{R}^N)$. Then it follows from Lemma \ref{l:CompEmbe} that $u_n\rightarrow u$ in $L^p(\mathbb{R}^N)$. Consequently, by Lemma \ref{l:SupOperInfi}, we have $(u_n^+)^{p-1}\rightarrow (u^+)^{p-1}$ in $L^q(\mathbb{R}^N)$, where $q:=p/(p-1)$.

Note that
\begin{equation}\label{e:pscond}
\|u_n-u\|^2=\langle E\,'(u_n)-E\,'(u), u_n-u\rangle+\int_{\mathbb{R}^N}({u^+_n(x)}^{p-1}-{u^+(x)}^{p-1})(u_n(x)-u(x))\mathrm{d}x.
\end{equation}

For the first term of the right hand side of the above equality, we see that
\[
\langle E\,'(u_n)-E\,'(u), u_n-u\rangle\rightarrow 0\;\mathrm{as}\;n\rightarrow\infty,
\]
since  $E\,'(u_n)\rightarrow 0 \; \mathrm{as}\;n\rightarrow\infty$ and $\{u_n\}_{n\in\mathbb{N}}$ is bounded in $ H_{\mathbf{O}(N)}^1(\mathbb{R}^N)$ .

And for the second term, it follows from H\"{o}lder inequality that
\[
\begin{aligned}
&\quad\int_{\mathbb{R}^N}({u^+_n(x)}^{p-1}-{u^+(x)}^{p-1})(u_n(x)-u(x))\mathrm{d}x\\
&\leq\|{u^+_n(x)}^{p-1}-{u^+(x)}^{p-1}\|_{L^q}\|u_n(x)-u(x)\|_{L^p}\rightarrow 0\;\mathrm{as} \;n\rightarrow\infty,
\end{aligned}
\]
because $u_n\rightarrow u$ in $L^p(\mathbb{R}^N)$ and $(u_n^+)^{p-1}\rightarrow (u^+)^{p-1}$ in $L^q(\mathbb{R}^N)$.

Therefore, $u_n\rightarrow u$ in $H_{\mathbf{O}(N)}^1(\mathbb{R}^N)$ as $n\rightarrow\infty$ by (\ref{e:pscond}).
\end{proof}

Now we are at the position to give a proof of Theorem \ref{t:MainTheo}.
\begin{proof}[Proof of Theorem \ref{t:MainTheo}]
1. Consider the functional $E$ restricted to $H_{\mathbf{O}(N)}^1(\mathbb{R}^N)$. Thanks to Lemma \ref{l:CompEmbe} or Sobolev imbedding theorem, there is a positive constant $c$ such that $\|u\|_{L^p}\leq c\|u\|$ for any $u\in H_{\mathbf{O}(N)}^1(\mathbb{R}^N)$. Then it follows from the definition of the functional $E$ that
\[
E(u)\geq \frac{1}{2}\|u\|^2-\frac{c^p}{p}\|u\|^p.
\]
Setting $r:=\left(\frac{p}{4c^p}\right)^{\frac{1}{p-2}}$, we have
\[
\inf\limits_{{}^{\|u\|=r}}E(u)\geq \frac{1}{4}\left(\frac{p}{4c^p}\right)^{\frac{2}{p-2}}>0.
\]

\noindent 2. Set $w(x):=\exp\left(-|x|^2\right)$. Then $w(x)\in H_{\mathbf{O}(N)}^1(\mathbb{R}^N)$ and for any $t\in [0,+\infty)$,
\[
E(tw)=\frac{t^2}{2}\|w\|^2-\frac{t^p}{p}\|w\|_{L^p}^p.
\]

Note that $p>2$. We can take a positive number $t$ such that $t\|w\|>r$ and $E(tw)<0$.

\noindent 3. Now by the mountain pass theorem, there is a nontrivial critical point $u$ of the functional $E$ restricted to $H_{\mathbf{O}(N)}^1(\mathbb{R}^N)$. Note that the functional $E$ is $\mathbf{O}(N)$-invariant. Thanks to the principle of symmetric criticality (see, for example, \cite[p.18, Theorem 1.28]{Willem1996}), it follows that the point $u$ is also a critical point of the functional $E$. Consequently, the point $u$ is a weak solution of the equation $-2Au+\lambda u=(u^+)^{p-1}$ in $H^1(\mathbb{R}^N)$.

\noindent 4. Finally, Lemma \ref{l:Regula} and Lemma \ref{l:Posi} complete the proof.
\end{proof}

\bibliographystyle{amsplain}
\bibliography{Xbib}

\bigskip
\bigskip

\end{document}